\newtheorem{theorem}{Theorem}[section]
\newtheorem{lemma}[theorem]{Lemma}
\newtheorem{proposition}[theorem]{Proposition}
\theoremstyle{definition}
\newtheorem{definition}[theorem]{Definition}
\theoremstyle{remark}
\newtheorem{remark}[theorem]{Remark}
\numberwithin{equation}{section}
\begin{document}

\title{curvature and integrability of almost complex structures}

\author{}
\address{School of Mathematics, Northwest University, Xi'an 710127, China}
\curraddr{} \email{wanj\_m@aliyun.com}
\thanks{The author is supported by National Natural Science Foundation of China
No.11301416 and No.11601421.}

\author{Jianming Wan}
\address{}
\email{}
\thanks{}

\subjclass[2010]{Primary 53C15; Secondary 53C20}

\date{}

\dedicatory{}

\keywords{curvature, almost complex structure}

\begin{abstract}
This note is concerned in so called harmonic complex structures introduced by the author previously. I will recall some previous results and emphasize the motivation: Provide an attempt to a fundamental problem in geometry--determining the complex structures on an almost complex manifold.  I also discuss the almost-Hermitian case of harmonic complex structures and the connections with balanced structures.
\end{abstract}
\maketitle

\section{Introduction}
Let $M$ be an almost complex manifold. This means that there exists a smooth section $J\in \Gamma(T^{*}M\bigotimes TM)=\Gamma(\hom(TM, TM))$ satisfying
\begin{equation}
J^{2}=-id.
\end{equation}
 The $J$ is called almost complex structure of $M$. An almost complex manifold must be oriented and has even dimension. Determining an almost
complex structure on a manifold is a purely topological problem (equivalent to the structure groups of tangent bundle $GL(2n,\mathbb{R})$ can reduce to $GL(n,\mathbb{C})$) and has been studied well \cite{[S]}. In principle, we always can determine whether a given manifold has an almost complex structure (though the procedure may be very complex). In dimension 4 we have a fundamentally topological criterion of Wen-tsun Wu to determining the almost complex structure \cite{[Wu]}.

An almost complex structure $J$ on $M$ is said to be integrable, if it can induce a complex (manifold) structure.
By the famous Newlander-Nirenberg theorem \cite{[NN]}, J is integrable if and only if the Nijenhuis tensor vanishes, i.e.
\begin{equation}
N(J)(X,Y)=J[JX,Y]+J[X,JY]+[X,Y]-[JX,JY]\equiv 0, \label{f1.2}
\end{equation}
 for all $X,Y \in \Gamma (TM)$.
 \emph{A fundamental problem in geometry is to determine the complex structures on an almost complex manifold.}
 The case of 2-dimension is classical. Every surface is a complex manifold, which is called Riemann surface. In dimension 4,
 by combining Wu's criterion with some results in algebraic geometry we can construct many compact almost complex manifolds without
 any complex structure (c.f. \cite{[BPV]} page 167). For instance, $S^{1}\times S^{3}\sharp S^{1}\times S^{3}\sharp S^{2}\times S^{2}$
 and $S^{1}\times S^{3} \sharp S^{1}\times S^{3}\sharp \mathbb{C}P^{2}$. However, up to now, we do not find a single higher dimensional
 manifold with almost complex structures but no complex structure. The higher dimensional examples seem to be existent undoubtedly.
 But another opinion of Yau (c.f. \cite{[Ya]} problem 52) asserts that every compact almost complex manifold of dimension $\geq 6$ admits a complex structure.
  As well known, one can construct an almost complex structure on $S^{6}$ by using quaternions. But this almost complex structure is not integrable.
   It is an outstanding problem to determine the complex structures on $S^{6}$. $S^{6}$ is a touchstone to understand the complex structures of higher
   dimensional manifolds .

To deal with the fundamental problem, there are two folds: 1) To show the existence of complex structures,
we should find some effective sufficient conditions to the existence of complex structures; 2) To show the nonexistence of complex structures,
 we need to find some obstructions. The sufficient conditions or obstructions should be involved in the geometry or topology of manifolds.

We are mainly concerned in the connections between complex structures and curvature of manifolds. Let $(M,J)$ be an almost complex manifold.
We give a Riemannian metric $g$ on $M$. Then we can define the Hodge-Laplace operator $\Delta$ acting on tangent bundle-valued differential forms.
 Since $J$ can be seen as a tangent bundle-valued 1-forms, we may consider the action of $\Delta$ on $J$. When the manifold is compact,
 in \cite{[W]} the author observed that
 \begin{equation} \Delta J=0 \end{equation} implies $J$ is integrable. On the other hand, the Bochner formula of $\Delta J$ contains curvature terms.
  So we can connect the integrability of almost complex structures with the geometry (curvature) of manifolds. In a sense this provides a probability for
   studying of existence of complex structures.

\section{Bochner techniques for tangent bundle-valued differential forms}
The materials in this section are standard, which can be found in many literatures. For example \cite{[X]}.
\subsection{Hodge-Laplace operator}
Notations.  $\{e_{i}, 1\leq i\leq n\}$: local orthonormal frame field; $X, X_{0}, X_{1},\cdot\cdot\cdot,X_{p}$: smooth sections of tangent bundle $TM$; $\omega,\theta$: smooth sections of $\wedge^{p}T^{*}M\otimes TM$.

Let $(M, g)$ be a Riemannian manifold. Let $\nabla$ be the Levi-Civita
connection associated with $g$. $\nabla$ can be extended canonically to
$\Gamma(\wedge^{p}T^{*}M\otimes TM)$ by
$$(\nabla_{X}\omega)(X_{1},\cdot\cdot\cdot, X_{p})=\nabla_{X}(\omega(X_{1},\cdot\cdot\cdot, X_{p}))-\sum_{k=1}^{p}
\omega(X_{1},\cdot\cdot\cdot, \nabla_{X}X_{k},\cdot\cdot\cdot, X_{p}).$$

We can define the differential operator $d :
\Gamma(\wedge^{p}T^{*}M\otimes TM) \longrightarrow
\Gamma(\wedge^{p+1}T^{*}M\otimes TM),$
\begin{equation}
d\omega(X_{0},\cdot\cdot\cdot,
X_{p})=\sum_{k=0}^{p}(-1)^{k}(\nabla_{X_{k}}\omega)(X_{0},\cdot\cdot\cdot, \hat{X}_{k},\cdot\cdot\cdot,
X_{p}),
\end{equation}
where $\hat{X}_{k}$ denotes removing ${X}_{k}$.
The co-differential operator $\delta : \Gamma(\wedge^{p}T^{*}M\otimes
TM) \longrightarrow \Gamma(\wedge^{p-1}T^{*}M\otimes TM)$ is given
by
\begin{equation}
\delta\omega(X_{1},\cdot\cdot\cdot, X_{p-1})=-\sum_{i=1}^{n}(\nabla_{e_{i}}\omega)(e_{i},
X_{1}, ..., X_{p-1}).
\end{equation}

The \textbf{Hodge-Laplace operator} is defined by
\begin{equation}
\Delta\triangleq d\delta+\delta d.
\end{equation}

For any $\omega,\theta \in
\Gamma(\wedge^{p}T^{*}M\otimes TM)$, we have the induced inner product
\begin{equation}
\langle\omega,\theta\rangle\triangleq \sum_{1\leq i_{1}<\cdot\cdot\cdot<i_{p}\leq n}\langle\omega(e_{i_{1}},\cdot\cdot\cdot,e_{i_{p}}), \theta(e_{i_{1}},\cdot\cdot\cdot,e_{i_{p}})\rangle,
\end{equation}
If $M$ is compact, we have the global inner product
\begin{equation}
(\omega, \theta) \triangleq \int_{M}\langle\omega, \theta\rangle dv.
\end{equation}
By the self-adjoint property of $\Delta$, we have
\begin{equation}
(\Delta \omega, \omega)= (d\omega, d\omega)+(\delta\omega, \delta\omega)\geq 0.
\end{equation}
So $\Delta\omega=0$ if and only if $d\omega=0$ and
$\delta\omega=0$.

We should mention that in general $d^{2}\neq 0$. For $A\in \Gamma(T^{*}M\bigotimes TM)$,
$d^{2}A(X_{1},X_{2},X_{3}) = R(X_{3},X_{2})AX_{1}+R(X_{1},X_{3})AX_{2}+R(X_{2},X_{1})AX_{3}$.

\subsection{Weitzenb\"{o}ck formula}
\begin{proposition} \label{p2.1}
 For any tangent bundle-valued $p$-form $\omega$, we have
\begin{equation} \Delta\omega=-\nabla^{2}\omega+S, \label{f2.7} \end{equation} where
$\nabla^{2}\omega=\nabla_{e_{i}}\nabla_{e_{i}}\omega-\nabla_{\nabla_{e_{i}}e_{i}}\omega$
and $S(X_{1},\cdot\cdot\cdot, X_{p})=(-1)^{k}(R(e_{i}, X_{k})\omega)(e_{i},
X_{1},\cdot\cdot\cdot, \hat{X_{k}},\cdot\cdot\cdot, X_{p}),$ for any $X_{1},\cdot\cdot\cdot,
X_{p}\in\Gamma(TM)$. $R$ is the curvature tensor $R(X,
Y)=-\nabla_{X}\nabla_{Y}+\nabla_{Y}\nabla_{X}+\nabla_{[X, Y]}$ and
$\{e_{i}\}$ is the local orthonormal frame field.
\end{proposition}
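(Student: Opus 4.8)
The plan is to compute $\Delta\omega = (d\delta + \delta d)\omega$ directly from the definitions of $d$ and $\delta$ given above, and to track carefully how the "rough Laplacian" $\nabla^2\omega = \nabla_{e_i}\nabla_{e_i}\omega - \nabla_{\nabla_{e_i}e_i}\omega$ emerges, with all remaining terms assembling into the curvature expression $S$. First I would fix a point $x\in M$ and choose the local orthonormal frame $\{e_i\}$ to be geodesic at $x$, so that $\nabla_{e_i}e_j(x)=0$; this kills the $\nabla_{\nabla_{e_i}e_i}\omega$ correction term at $x$ and makes the bookkeeping manageable. Then I would evaluate $(\delta d\omega)(X_1,\dots,X_p) = -\sum_i(\nabla_{e_i}(d\omega))(e_i,X_1,\dots,X_p)$ and $(d\delta\omega)(X_1,\dots,X_p) = \sum_{k}(-1)^k(\nabla_{X_k}(\delta\omega))(X_1,\dots,\hat X_k,\dots,X_p)$, expanding each using the formula for $d$ and $\delta$ and the definition of the extended connection $\nabla$, always evaluating at $x$ with the $X_j$ extended so that $\nabla X_j(x)=0$ as well.

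The key step is the cancellation: when $\delta d\omega$ is expanded, it contains one term of the form $-\sum_i\nabla_{e_i}\nabla_{e_i}\omega(X_1,\dots,X_p)$ (which is $-\nabla^2\omega$ at $x$ in our frame) plus a collection of mixed terms $\sum_{i,k}(-1)^k(\nabla_{e_i}\nabla_{X_k}\omega)(e_i,X_1,\dots,\hat X_k,\dots,X_p)$; when $d\delta\omega$ is expanded it produces the terms $\sum_{i,k}(-1)^{k+1}(\nabla_{X_k}\nabla_{e_i}\omega)(e_i,X_1,\dots,\hat X_k,\dots,X_p)$. Adding these, the mixed second-covariant-derivative terms combine into $\sum_{i,k}(-1)^k\big(\nabla_{e_i}\nabla_{X_k} - \nabla_{X_k}\nabla_{e_i}\big)\omega(e_i,\dots)$, which — using that $[e_i,X_k](x)=0$ in our adapted frame, so $\nabla_{[e_i,X_k]}=0$ there — is exactly $\sum_{i,k}(-1)^k\big(R(e_i,X_k)\omega\big)(e_i,X_1,\dots,\hat X_k,\dots,X_p) = S(X_1,\dots,X_p)$, matching the claimed curvature term (with the summation convention on repeated $i$ and $k$). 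One must also check that the remaining "lower-order" pieces — those coming from differentiating the arguments $X_j$ inside $\omega$, and from the $(-1)^k$ sign pattern in the Čech-style alternating sum — cancel pairwise between $\delta d$ and $d\delta$; this is the standard combinatorial identity behind the Weitzenböck formula and uses only that, at $x$ in the adapted frame, all first covariant derivatives of the frame and the test vectors vanish.

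The main obstacle is purely organizational rather than conceptual: keeping the signs and the index $k$ (the slot being skipped) straight through the double expansion, and making sure that the terms in which $e_i$ lands in a slot other than the first — or in which two derivatives hit two different test vectors — genuinely cancel. A clean way to control this is to introduce the interior-product/exterior-product operators $\iota_{e_i}$ and $e^i\wedge(-)$ acting on bundle-valued forms, write $\delta = -\iota_{e_i}\nabla_{e_i}$ and $d = e^i\wedge\nabla_{e_i}$ (valid at $x$ in the geodesic frame), and use the anticommutation relation $\iota_{e_i}(e^j\wedge\,\cdot\,) + e^j\wedge\iota_{e_i}(\cdot) = \delta_{ij}$; then $\delta d + d\delta$ collapses algebraically to $-\nabla_{e_i}\nabla_{e_i}$ plus the commutator term $[\nabla_{e_i},\nabla_{e_j}]$ contracted against $\iota_{e_i}(e^j\wedge\,\cdot\,)$, and the latter is manifestly the curvature term $S$. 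Finally I would note that although the computation was carried out at a point in a special frame, both sides of \eqref{f2.7} are tensorial (independent of the frame), so the identity holds everywhere and for arbitrary vector fields $X_1,\dots,X_p$.
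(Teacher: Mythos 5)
The paper does not actually prove Proposition \ref{p2.1}: it is quoted as standard material and referred to the literature (e.g.\ \cite{[X]}), so there is no in-paper argument to compare against. Your strategy --- expand $(d\delta+\delta d)\omega$ at a point in a frame that is geodesic at that point, with the test fields extended parallelly there, isolate the rough Laplacian, and recognize the leftover mixed second derivatives as the curvature commutator --- is exactly the standard proof, and the closing remark that tensoriality of both sides upgrades the pointwise computation to a global identity is the right way to finish. The alternative packaging via $d=e^{i}\wedge\nabla_{e_{i}}$, $\delta=-\iota_{e_{i}}\nabla_{e_{i}}$ and the anticommutation relation $\iota_{e_{i}}(e^{j}\wedge\,\cdot\,)+e^{j}\wedge\iota_{e_{i}}=\delta_{ij}$ is also correct and is indeed the cleanest way to control the combinatorics.

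The one thing to fix is the sign bookkeeping in the middle step, which is the entire content of the computation. With the paper's definitions, the mixed terms of $\delta d\omega$ come out as $-\sum_{i,k}(-1)^{k}(\nabla_{e_{i}}\nabla_{X_{k}}\omega)(e_{i},\dots,\hat X_{k},\dots)$ (the overall minus from the definition of $\delta$ survives), and those of $d\delta\omega$ come out as $+\sum_{i,k}(-1)^{k}(\nabla_{X_{k}}\nabla_{e_{i}}\omega)(e_{i},\dots,\hat X_{k},\dots)$ (the minus in $\delta$ cancels against the shift from $(-1)^{k}$ to $(-1)^{k-1}$ when the skipped slot is reindexed). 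You have both signs reversed, so your combined commutator is $\nabla_{e_{i}}\nabla_{X_{k}}-\nabla_{X_{k}}\nabla_{e_{i}}$ instead of the correct $\nabla_{X_{k}}\nabla_{e_{i}}-\nabla_{e_{i}}\nabla_{X_{k}}$. You then identify your (wrong-sign) commutator with $R(e_{i},X_{k})$, which is valid only for the convention $R(X,Y)=\nabla_{X}\nabla_{Y}-\nabla_{Y}\nabla_{X}-\nabla_{[X,Y]}$ --- but the paper uses the opposite convention $R(X,Y)=-\nabla_{X}\nabla_{Y}+\nabla_{Y}\nabla_{X}+\nabla_{[X,Y]}$, under which it is the correct commutator $\nabla_{X_{k}}\nabla_{e_{i}}-\nabla_{e_{i}}\nabla_{X_{k}}$ that equals $R(e_{i},X_{k})$ at the chosen point. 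The two slips cancel, so your final formula agrees with \eqref{f2.7}, but as written the intermediate steps are not consistent with the stated definitions and would not survive the careful verification you yourself identify as the main obstacle.
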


In fact the Bochner technique can work on any Riemannian vector bundles. For our purpose we only focus on tangent bundles.

\section{Harmonic complex structures}
From now on we assume that $(M, J)$ is a compact almost complex manifold. We give a Riemannian metric $g$ on $M$. $J$ \textbf{does not need} to be compatible with $g$.
The author introduced the following concept in \cite{[W]}.

\begin{definition}
We say that $J$ is a harmonic complex structure if $\Delta J=0$.
\end{definition}

By the self-adjoint property, $\Delta J=0$ if and only if $dJ=0$ and $\delta J=0$. Recall that a K\"{a}hler structure means an almost complex structure $J$ compatible with $g$ and satisfying $\nabla J=0$. So a K\"{a}hler structure must be a harmonic complex structure. The following observation shows the meaning of harmonic complex structure.

\begin{proposition}\label{p3.2}
\cite{[W]} A harmonic complex structure is integrable.
\end{proposition}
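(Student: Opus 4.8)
The plan is to express the Nijenhuis tensor $N(J)$ through the Levi-Civita covariant derivative $\nabla J$ and then feed in the hypothesis. Note first that $\Delta J=0$ is, by the self-adjointness identity $(\Delta J,J)=(dJ,dJ)+(\delta J,\delta J)$, equivalent to the pair $dJ=0$ and $\delta J=0$; of these I expect only $dJ=0$ to be used, $\delta J=0$ coming along for free. In a local orthonormal frame $dJ(X,Y)=(\nabla_{X}J)(Y)-(\nabla_{Y}J)(X)$, so the content of $dJ=0$ is precisely that the $TM$-valued form $(X,Y)\mapsto(\nabla_{X}J)(Y)$ is \emph{symmetric} in $X$ and $Y$.

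The main computational step is to rewrite \eqref{f1.2}. Since $\nabla$ is torsion-free we may substitute $[X,Y]=\nabla_{X}Y-\nabla_{Y}X$ into $N(J)$ and expand each term of the shape $\nabla_{\bullet}(J\bullet)=(\nabla_{\bullet}J)\bullet+J\nabla_{\bullet}\bullet$; the terms involving $\nabla$ of vector fields cancel in pairs (this is just the familiar check that $N(J)$ is a tensor), and one is left with
$$N(J)(X,Y)=J(\nabla_{X}J)(Y)-J(\nabla_{Y}J)(X)-(\nabla_{JX}J)(Y)+(\nabla_{JY}J)(X).$$
Alongside this I would record the identity obtained by differentiating $J^{2}=-\mathrm{id}$, namely $(\nabla_{X}J)\circ J=-J\circ(\nabla_{X}J)$, equivalently $(\nabla_{X}J)(JY)=-J\bigl((\nabla_{X}J)(Y)\bigr)$.

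Finally I would combine these. The first two terms of the displayed formula cancel immediately by the symmetry of $\nabla J$. For the last two, symmetry turns $(\nabla_{JX}J)(Y)$ into $(\nabla_{Y}J)(JX)$ and $(\nabla_{JY}J)(X)$ into $(\nabla_{X}J)(JY)$; the anticommutation identity then rewrites these as $-J(\nabla_{Y}J)(X)$ and $-J(\nabla_{X}J)(Y)$, and one more application of symmetry shows they too cancel. Hence $N(J)\equiv0$ and $J$ is integrable by the Newlander--Nirenberg theorem. I anticipate no analytic difficulty: compactness is needed only to deduce $dJ=0$ from $\Delta J=0$, and the single delicate point is the sign bookkeeping in reducing $N(J)$ to the covariant expression above.
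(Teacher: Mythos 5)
Your proof is correct and is essentially the paper's argument in slightly different packaging: the paper establishes the single identity $dJ(X,Y)-dJ(JX,JY)=-JN(J)(X,Y)$, which is exactly your covariant formula for $N(J)$ after applying the anticommutation $(\nabla_X J)\circ J=-J\circ(\nabla_X J)$, and both proofs then conclude from $dJ=0$. Your formula for $N(J)$ in terms of $\nabla J$ checks out, and you correctly identify that only $dJ=0$ (symmetry of $(X,Y)\mapsto(\nabla_XJ)Y$) is used, with $\delta J=0$ playing no role.
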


\begin{proof}
We only need to show that the Nijenhuis tensor \ref{f1.2} vanishes. By direct computation, one has
\begin{eqnarray*}
dJ(X, Y)-dJ(JX, JY) &=&((\nabla_{X}J)Y-(\nabla_{Y}J)X)-((\nabla_{JX}J)JY-(\nabla_{JY}J)JX)\\
&=& [JX, Y]+[X, JY]+J[JX, JY]-J[X, Y]\\
&=&-JN(J)(X,Y).
\end{eqnarray*}
 Since $\Delta J=0$ implies $dJ=0$, hence $N(J)=0$.
\end{proof}

Denote $K$: K\"{a}hler structures; $H$: harmonic complex structures; $C$: complex structures. We have the inclusion relation $$K \subset H \subset C.$$

Let $e(J)=\frac{1}{2}|Je_{i}|^{2}$ denote the energy density of $J$.  Applying proposition \ref{p2.1} to $J$, we can obtain the Bochner type formula.
\begin{theorem}\label{t3.3}
\cite{[W]}
\begin{equation}
\Delta e(J)+\langle\Delta J,
J\rangle=|\nabla J|^{2}+\langle JR(e_{i},
e_{j})e_{i}, Je_{j}\rangle-\langle R(e_{i}, e_{j})Je_{i}, Je_{j}\rangle,\label{f3.1}
\end{equation}
where  $|\nabla J|^{2}=|(\nabla _{e_{i}}J)(e_{j})|^{2}$.
\end{theorem}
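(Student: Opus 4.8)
The plan is to apply the Weitzenb\"ock formula of Proposition \ref{p2.1} to the tangent bundle-valued 1-form $\omega = J$, take the pointwise inner product with $J$, and then recognize the Bochner term $-\langle \nabla^2 J, J\rangle$ as a divergence-type expression that reorganizes into $\Delta e(J)$ plus the full norm $|\nabla J|^2$. First I would write, for a 1-form $\omega$, the curvature term $S(X) = R(e_i, X)\omega(e_i)$ from Proposition \ref{p2.1}, so that specializing to $\omega = J$ gives $S(X) = R(e_i, X)Je_i$; pairing with $JX$ and summing over $X = e_j$ produces $\langle R(e_i, e_j)Je_i, Je_j\rangle$. This already matches the last term on the right of \eqref{f3.1} up to sign, so I will need to be careful with the sign convention $R(X,Y) = -\nabla_X\nabla_Y + \nabla_Y\nabla_X + \nabla_{[X,Y]}$ used in the paper and with the antisymmetry $R(e_i,e_j)=-R(e_j,e_i)$ to massage it into the stated form $\langle JR(e_i,e_j)e_i, Je_j\rangle - \langle R(e_i,e_j)Je_i, Je_j\rangle$ (here the first term arises because $J$ is parallel-commuting only up to $\nabla J$, i.e. one uses $\nabla(Je_i) = (\nabla J)e_i + J\nabla e_i$ when moving $J$ through the rough Laplacian, which is precisely where the extra $\langle JR e_i e_i, Je_j\rangle$ piece is generated).

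Next I would handle the rough Laplacian term. The key identity is the standard one: for any vector-bundle-valued tensor, $\tfrac12 \Delta |\omega|^2 = \langle \nabla^2\omega, \omega\rangle - |\nabla\omega|^2$ pointwise (with the analyst's sign for $\Delta$ on functions matching the $-\nabla_{e_i}\nabla_{e_i} + \nabla_{\nabla_{e_i}e_i}$ convention). Applying this with $\omega = J$ and noting $e(J) = \tfrac12|Je_i|^2 = \tfrac12|J|^2$ as a function on $M$, I get $\Delta e(J) = -\langle \nabla^2 J, J\rangle + |\nabla J|^2$ after a sign bookkeeping. Then from $\Delta J = -\nabla^2 J + S$ I take $\langle \cdot, J\rangle$ to obtain $\langle \Delta J, J\rangle = -\langle \nabla^2 J, J\rangle + \langle S, J\rangle$. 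Subtracting, $\langle \Delta J, J\rangle - (\Delta e(J) - |\nabla J|^2) = \langle S, J\rangle$, i.e. $\Delta e(J) + \langle \Delta J, J\rangle = |\nabla J|^2 + \langle S, J\rangle$, and then I substitute the curvature expression for $\langle S, J\rangle$ worked out above.

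The main obstacle will be the sign/convention bookkeeping in the curvature term: matching the paper's $R$-sign convention, the ordering of indices in $S$, and correctly producing the \emph{two} curvature terms rather than one. Concretely, the subtlety is that $\langle S, J\rangle = \langle R(e_i,e_j)Je_i, Je_j\rangle$ is a single trace, but the stated right-hand side splits it using the first Bianchi identity or the symmetry of $R$ together with an integration-free algebraic manipulation involving $J$; I expect one must write $\langle JR(e_i,e_j)e_i, Je_j\rangle - \langle R(e_i,e_j)Je_i, Je_j\rangle$ as what genuinely appears and verify it equals (or differs by a controlled sign from) $-\langle S, J\rangle$. I would double-check this by evaluating both sides in the K\"ahler case, where $\nabla J = 0$ forces the curvature combination to vanish identically, which serves as a consistency test. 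Everything else — extending $\nabla$ to $\wedge^p T^*M\otimes TM$, the commutation $[\nabla_{e_i}, J]$, summation over an orthonormal frame — is routine once the conventions are pinned down.
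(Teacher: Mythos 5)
Your overall strategy is exactly the paper's: apply the Weitzenb\"ock formula of Proposition \ref{p2.1} to $\omega=J$, pair with $J$, and use $\langle\nabla^{2}J,J\rangle=\Delta e(J)-|\nabla J|^{2}$ together with an identification of $\langle S,J\rangle$. The rough-Laplacian half is fine (modulo the sign slips you flag yourself; note in particular that no curvature is generated there, since $\nabla_{e_i}\nabla_{e_i}\tfrac12|J|^2=\langle\nabla_{e_i}\nabla_{e_i}J,J\rangle+|\nabla_{e_i}J|^2$ involves no commutator of covariant derivatives).

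The genuine gap is in your treatment of $\langle S,J\rangle$. You read the curvature term of Proposition \ref{p2.1} as $S(X)=R(e_i,X)\bigl(\omega(e_i)\bigr)$, i.e.\ curvature acting only on the vector value $Je_i$. But the proposition's $S(X)=(R(e_i,X)\omega)(e_i)$ has $R(e_i,X)$ acting on $\omega=J$ \emph{as a section of} $T^{*}M\otimes TM$, so
\begin{equation*}
(R(e_i,X)J)(e_i)=R(e_i,X)(Je_i)-J\bigl(R(e_i,X)e_i\bigr),
\end{equation*}
and pairing with $Je_j$ and summing produces \emph{both} curvature terms of \eqref{f3.1} at once (with the relative sign fixed by the index ordering in the paper's convention for $R$). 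Because you miss this, you are forced to invent a source for the term $\langle JR(e_i,e_j)e_i,Je_j\rangle$, and both candidates you propose are dead ends: commuting $J$ through the rough Laplacian generates no curvature (see above), and the first Bianchi identity only permutes the three vector arguments of $R$ -- it can never introduce the external $J$ appearing in $\langle JR(e_i,e_j)e_i,Je_j\rangle$, which is a genuinely different contraction from $\langle R(e_i,e_j)Je_i,Je_j\rangle$ (when $J$ is metric-compatible the former is essentially the scalar curvature, cf.\ Theorem \ref{t5.2}). Your K\"ahler consistency check is correct but does not detect this, since in that case $\nabla J=0$ makes the two contractions coincide. Once you use the correct tensorial action of $R$ on $J$, the computation closes in one line, exactly as in the paper.
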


\begin{proof}
Following the notations in proposition \ref{p2.1}, we can check that
$$\langle S, J\rangle=\langle JR(e_{i}, e_{j})e_{i},Je_{j}\rangle-\langle R(e_{i}, e_{j})Je_{i}, Je_{j}\rangle$$ and
 $$\langle\nabla^{2}J, J\rangle=\Delta e(J)-|\nabla J|^{2}.$$ Then the theorem is straightforward from formula \ref{f2.7}.
\end{proof}

The below table gives the comparative relations.

\begin{table}[ht]
\caption{}\label{eqtable}
\renewcommand\arraystretch{1.5}
\noindent\[
\begin{array}{|c|c|c|}
\hline
Kahler\ structure & harmonic\ complex\ structure\\
\hline
totally\ geodesic\ map  & harmonic\ map\\
\hline
totally\ geodesic\ submanifold & minimal\ submanifold\\
\hline
\end{array}
\]
\end{table}

\section{Some applications of harmonic complex structures}
From theorem \ref{t3.3}, we have $$(dJ,dJ)+(\delta J,\delta J)=(\Delta J,J)=\int_{M}(|\nabla J|^{2}+\langle JR(e_{i},
e_{j})e_{i}, Je_{j}\rangle-\langle R(e_{i}, e_{j})Je_{i}, Je_{j}\rangle)dv\geq0.$$
So $J$ is a harmonic complex structure if and only if $$\int_{M}(|\nabla J|^{2}+\langle JR(e_{i},
e_{j})e_{i}, Je_{j}\rangle-\langle R(e_{i}, e_{j})Je_{i}, Je_{j}\rangle)dv=0.$$

 Combining proposition \ref{p3.2}, we obtain a geometric sufficient condition of integrability of an almost complex structure.
\begin{theorem}\label{t4.1}
If \begin{equation} \int_{M}(|\nabla J|^{2}+\langle JR(e_{i},
e_{j})e_{i}, Je_{j}\rangle-\langle R(e_{i}, e_{j})Je_{i}, Je_{j}\rangle)dv=0, \label{f4.1} \end{equation} then $J$ is integrable.
\end{theorem}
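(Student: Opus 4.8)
The plan is to reduce Theorem~\ref{t4.1} directly to the machinery already assembled, so that essentially no new computation is needed. The starting point is Theorem~\ref{t3.3}, which gives the pointwise Bochner identity
\[
\Delta e(J)+\langle\Delta J,J\rangle=|\nabla J|^{2}+\langle JR(e_{i},e_{j})e_{i},Je_{j}\rangle-\langle R(e_{i},e_{j})Je_{i},Je_{j}\rangle.
\]
First I would integrate this identity over the compact manifold $M$. Since $M$ is closed, $\int_{M}\Delta e(J)\,dv=0$ (the integral of a Laplacian of a function on a closed Riemannian manifold vanishes, by Stokes' theorem applied to $\Delta e(J)=\delta d\, e(J)$ up to sign, or equivalently because $\Delta$ is formally self-adjoint and $1$ is smooth). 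Hence
\[
\int_{M}\langle\Delta J,J\rangle\,dv=\int_{M}\bigl(|\nabla J|^{2}+\langle JR(e_{i},e_{j})e_{i},Je_{j}\rangle-\langle R(e_{i},e_{j})Je_{i},Je_{j}\rangle\bigr)dv.
\]

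Next I would invoke the self-adjointness of $\Delta$ on tangent-bundle-valued forms, already recorded in the excerpt as $(\Delta\omega,\omega)=(d\omega,d\omega)+(\delta\omega,\delta\omega)$. Applied to $\omega=J$, this says
\[
\int_{M}\langle\Delta J,J\rangle\,dv=(\Delta J,J)=(dJ,dJ)+(\delta J,\delta J).
\]
Combining with the integrated Bochner identity, the hypothesis \eqref{f4.1} forces $(dJ,dJ)+(\delta J,\delta J)=0$. Since each summand is a nonnegative global inner product, we get $dJ=0$ and $\delta J=0$, hence $\Delta J=d\delta J+\delta d J=0$. Thus $J$ is a harmonic complex structure, and by Proposition~\ref{p3.2} it is integrable. (Alternatively, once $dJ=0$ one may cite the computation in the proof of Proposition~\ref{p3.2} showing $dJ(X,Y)-dJ(JX,JY)=-JN(J)(X,Y)$, so that $dJ=0$ already gives $N(J)=0$ directly, without even needing $\delta J=0$.)

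There is essentially no hard step here: the theorem is a packaging of Theorem~\ref{t3.3}, the self-adjointness identity, and Proposition~\ref{p3.2}. The only point requiring a word of care is the vanishing of $\int_{M}\Delta e(J)\,dv$, which relies on compactness of $M$ (assumed throughout Section~3) and on the sign/normalization convention for $\Delta$ acting on functions being consistent with the one used in Proposition~\ref{p2.1}; I would state this explicitly rather than leave it implicit. One should also note that the metric $g$ is arbitrary and not assumed compatible with $J$, so no Hermitian or Kähler identities are used — the argument is purely Bochner-theoretic. If anything, the ``obstacle'' is purely expository: making sure the reader sees that \eqref{f4.1} is not merely sufficient but, in view of the displayed equivalence preceding the theorem, exactly equivalent to $J$ being a harmonic complex structure.
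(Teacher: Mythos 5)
Your proof is correct and follows essentially the same route as the paper: the displayed identity preceding Theorem~\ref{t4.1} is exactly your integrated Bochner formula combined with self-adjointness, giving $(dJ,dJ)+(\delta J,\delta J)=0$, hence $\Delta J=0$, and then Proposition~\ref{p3.2} finishes. Your parenthetical observation that $dJ=0$ alone already kills the Nijenhuis tensor is also consistent with how Proposition~\ref{p3.2} is actually proved.
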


Though we do not know whether the 6-sphere $S^{6}$ has a complex structure, as an
application of theorem \ref{t3.3} we have
\begin{theorem}\cite{[W]}
 $S^{6}$ with standard metric (or with small perturbation) can not admit any harmonic complex
structure.
\end{theorem}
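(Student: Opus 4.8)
The plan is to apply the Bochner-type identity of Theorem~\ref{t3.3} to the round sphere $S^{6}$, where the curvature tensor is completely explicit, and to show that the resulting pointwise curvature expression is so negative that the integrated identity $(dJ,dJ)+(\delta J,\delta J)=(\Delta J,J)$ forces $J$ itself to vanish on tangent vectors, contradicting $J^{2}=-\mathrm{id}$. First I would record that for the standard metric of constant sectional curvature $1$ the curvature operator is $R(X,Y)Z=\langle Y,Z\rangle X-\langle X,Z\rangle Y$ (with whatever sign convention matches the $R$ used in Proposition~\ref{p2.1}), and substitute this into the two curvature terms $\langle JR(e_{i},e_{j})e_{i},Je_{j}\rangle$ and $\langle R(e_{i},e_{j})Je_{i},Je_{j}\rangle$ appearing on the right-hand side of \eqref{f3.1}. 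Carrying out the contractions with the orthonormal frame $\{e_{i}\}$, each term reduces to a multiple of $|Je_{i}|^{2}=2e(J)$ and of $(\mathrm{tr}\,J)^{2}$ or $|J|^{2}$; since $J$ is almost complex, $\mathrm{tr}\,J=0$ pointwise, so the curvature contribution collapses to a negative constant multiple of $e(J)$, with the dimension $n=6$ entering through the trace over the frame.

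Next I would integrate \eqref{f3.1} over the closed manifold $S^{6}$: the term $\int_{M}\Delta e(J)\,dv$ vanishes because $\Delta$ is a divergence, and $\int_{M}\langle\Delta J,J\rangle\,dv=(dJ,dJ)+(\delta J,\delta J)\ge 0$. This yields an inequality of the shape
\begin{equation*}
0\le (dJ,dJ)+(\delta J,\delta J)=\int_{S^{6}}\bigl(|\nabla J|^{2}-c\,e(J)\bigr)\,dv
\end{equation*}
for an explicit positive constant $c$ (coming from the constant curvature). If it turns out that $|\nabla J|^{2}$ cannot compensate $c\,e(J)$ — which is exactly where the problem could be subtle, since a priori $\nabla J$ is unconstrained — then the right-hand side would be forced negative, a contradiction, so no harmonic complex structure can exist. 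The clean way to make the compensation argument work is to control $\int|\nabla J|^{2}$ from above: differentiating $J^{2}=-\mathrm{id}$ gives $(\nabla_{X}J)J+J(\nabla_{X}J)=0$, i.e.\ $\nabla J$ anticommutes with $J$, and combined with $dJ=0$, $\delta J=0$ one extracts an identity that bounds $|\nabla J|^{2}$ pointwise (or in integral) by the same curvature quantity, closing the loop.

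The main obstacle I anticipate is precisely the bookkeeping that forces the sign: one must verify that after all frame-contractions the curvature terms in \eqref{f3.1} do not partially cancel and actually leave a term of a definite (negative) sign of the right magnitude, and one must make sure the $|\nabla J|^{2}$ term genuinely cannot rescue the inequality — this is where the constraint $\nabla J\,J+J\,\nabla J=0$ together with $\delta J=0$ must be used to get a matching upper bound on $\int|\nabla J|^{2}$, rather than merely discarding it. For the ``small perturbation'' statement I would argue by openness: all the quantities above ($e(J)$, the curvature terms, $|\nabla J|^{2}$) depend continuously on the metric $g$ in, say, the $C^{2}$-topology, and the strict inequality obtained for the round metric on the compact space of candidate structures (or the strict positivity of the relevant constant $c$) persists under a sufficiently small perturbation of $g$; hence the contradiction survives and no harmonic complex structure exists for nearby metrics either.
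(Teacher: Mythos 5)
Your overall frame (apply Theorem \ref{t3.3} on the round $S^{6}$, integrate, use $(\Delta J,J)=(dJ,dJ)+(\delta J,\delta J)$, and treat perturbed metrics by continuity of a strict inequality) is the right one, but you have the sign of the curvature contribution backwards, and this turns a one-line argument into one that cannot close. With the convention of Proposition \ref{p2.1}, $R(X,Y)=-\nabla_{X}\nabla_{Y}+\nabla_{Y}\nabla_{X}+\nabla_{[X,Y]}$, the round metric gives $R(X,Y)Z=\langle X,Z\rangle Y-\langle Y,Z\rangle X$, and the frame contractions yield
\[
\langle JR(e_{i},e_{j})e_{i},Je_{j}\rangle=\sum_{i,j}\bigl(|Je_{j}|^{2}-\delta_{ij}\langle Je_{i},Je_{j}\rangle\bigr)=12e(J)-2e(J)=10e(J),\qquad
\langle R(e_{i},e_{j})Je_{i},Je_{j}\rangle=(\operatorname{tr}J)^{2}-\operatorname{tr}(J^{2})=0+6=6 .
\]
Moreover $J^{2}=-\mathrm{id}$ alone (no compatibility with $g$) forces $e(J)\geq 3$: writing $Je_{i}=J_{i}^{j}e_{j}$, one has $e(J)=\tfrac14\sum_{i,j}\bigl((J_{i}^{j})^{2}+(J_{j}^{i})^{2}\bigr)\geq\tfrac12\sum_{i}\bigl|\sum_{j}J_{i}^{j}J_{j}^{i}\bigr|=\tfrac12\sum_{i}1=3$. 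Hence the right-hand side of \eqref{f3.1} is pointwise at least $|\nabla J|^{2}+30-6\geq 24>0$, and integrating gives $(dJ,dJ)+(\delta J,\delta J)=(\Delta J,J)\geq 24\,\mathrm{vol}(S^{6})>0$, so $\Delta J\neq 0$. There is nothing to ``compensate'': the $|\nabla J|^{2}$ term has the same favorable sign as the curvature term. Note also that the lower bound $e(J)\geq 3$, which you do not invoke, is genuinely needed, since $J$ is not assumed compatible with $g$ and $10e(J)-6$ would not otherwise have a definite sign.

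By contrast, your plan hinges on the curvature contribution being a \emph{negative} multiple of $e(J)$, and therefore on producing an upper bound for $\int|\nabla J|^{2}$ out of $(\nabla_{X}J)J+J(\nabla_{X}J)=0$ and $dJ=\delta J=0$. No such bound is available, and you yourself flag this step as the one you cannot justify; as written the argument does not close. The slip is a convention mismatch: if you contract the round-sphere curvature written as $R(X,Y)Z=\langle Y,Z\rangle X-\langle X,Z\rangle Y$ you indeed get $-10e(J)$ and $-6$, but that is the opposite convention to the one under which \eqref{f3.1} was derived, so the net contribution to \eqref{f3.1} is again $+10e(J)-6$. (Your remark that the contradiction would ``force $J$ to vanish'' is also off target: the contradiction is simply $\Delta J\neq 0$.) Once the sign is fixed, your continuity argument for small $C^{2}$-perturbations of the metric is fine, since the pointwise bound $\geq 24$ is strict.
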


\begin{proof}
We only need to show that the right of formula \ref{f3.1} is positive. Under standard metric, the sectional curvature is equal to 1.
One can easily to show that $\langle JR(e_{i}, e_{j})e_{i}, Je_{j}\rangle=10e(J)\geq 30$ and $\langle R(e_{i}, e_{j})Je_{i}, Je_{j}\rangle=6$.
\end{proof}

A well-known result of LeBrun \cite{[L]} states that $S^{6}$ has no complex structure compatible with the standard metric.
Kefeng Liu and Xiaokui Yang \cite{[Y1]} \cite{[Y2]} proved that $S^{6}$ can not admit a complex structure compatible with a metric such that the sectional curvature lies in $(\frac{1}{4},1]$. In our result the compatible condition is removed. But geometric restriction is increased.

We also can get a K\"{a}hler criterion for a harmonic complex structure.

\begin{theorem}\cite{[W]}
Let $J$ be an Hermitian harmonic complex structure. Then the scale curvature $S\leq \langle R(e_{i}, e_{j})Je_{i}, Je_{j}\rangle$. The equal holds if and only if $J$ is a K\"{a}hler structure.
\end{theorem}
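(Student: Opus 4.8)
The plan is to obtain the inequality as a direct corollary of the Bochner formula \ref{f3.1} of Theorem~\ref{t3.3}, by specializing each of its three right‑hand terms to the Hermitian harmonic situation; no new analytic estimate is required, so the whole substantive content sits in Theorem~\ref{t3.3}.

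First I would eliminate the left‑hand side of \ref{f3.1}. Since $J$ is a harmonic complex structure, $\Delta J=0$, hence $\langle\Delta J,J\rangle\equiv 0$. Since $J$ is Hermitian it is a pointwise isometry of $(TM,g)$, so $|Je_{i}|=|e_{i}|=1$ and the energy density $e(J)=\frac{1}{2}|Je_{i}|^{2}\equiv\frac{n}{2}$ is constant, whence $\Delta e(J)\equiv 0$. Consequently \ref{f3.1} collapses to the pointwise identity
\begin{equation*}
0=|\nabla J|^{2}+\langle JR(e_{i},e_{j})e_{i},Je_{j}\rangle-\langle R(e_{i},e_{j})Je_{i},Je_{j}\rangle.
\end{equation*}

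Next I would identify the middle term. Using once more that $J$ is an isometry, $\langle JR(e_{i},e_{j})e_{i},Je_{j}\rangle=\langle R(e_{i},e_{j})e_{i},e_{j}\rangle$, and summing over the orthonormal frame this contraction equals the scalar curvature $S$ for the sign convention $R(X,Y)=-\nabla_{X}\nabla_{Y}+\nabla_{Y}\nabla_{X}+\nabla_{[X,Y]}$ fixed in Proposition~\ref{p2.1}. (A convenient check: for the round metric of constant curvature $1$ one has $\sum_{i}R(e_{i},e_{j})e_{i}=(n-1)e_{j}$, so the contraction is $n(n-1)$, which is indeed the scalar curvature there; this is the same computation underlying the $S^{6}$ argument above.) Substituting into the displayed identity yields
\begin{equation*}
\langle R(e_{i},e_{j})Je_{i},Je_{j}\rangle=S+|\nabla J|^{2}\geq S.
\end{equation*}

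Finally, equality holds if and only if $|\nabla J|^{2}=0$, i.e. $\nabla J=0$; together with the standing assumption that $J$ is compatible with $g$ this is exactly the statement that $(M,g,J)$ is K\"ahler, and conversely a K\"ahler structure is Hermitian, harmonic, and parallel, so equality does occur for it. The only step requiring genuine care is the second one — matching the nonstandard curvature sign convention of Proposition~\ref{p2.1} so that the middle term contributes $+S$ rather than $-S$; everything else is a formal reduction of \ref{f3.1}.
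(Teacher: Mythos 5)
Your proposal is correct and follows exactly the route of the paper's own proof: use harmonicity to kill $\langle\Delta J,J\rangle$, use compatibility to make $e(J)$ constant so $\Delta e(J)=0$, identify $\langle JR(e_{i},e_{j})e_{i},Je_{j}\rangle$ with $S$, and read off the inequality and the equality case $\nabla J=0$ from the Bochner formula \ref{f3.1}. The only difference is that you spell out the sign-convention check for the middle term, which the paper leaves implicit.
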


\begin{proof}
Since $J$ is an Hermitian harmonic complex structure, $e(J)=constant$. The left of formula \ref{f3.1} equals to zero. So $S- \langle R(e_{i}, e_{j})Je_{i}, Je_{j}\rangle=-|\nabla J|^{2}\leq 0$. The equal holds implies $\nabla J=0$. Namely $J$ is a K\"{a}hler structure.

\end{proof}

\section{Almost-Hermitian case}

\subsection{Almost-Hermitian manifold}
We know that $\Delta J=0$ if and only if both $dJ$ and $\delta J$ are equal to zero.
Since we mainly are concerned in the integrability of almost complex structures, only the $dJ=0$ is useful for our purpose.
We should remove the condition $\delta J=0$. When $J$ is compatible with the Riemannian metric, i.e. $M$ is an almost-Hermitian manifold, we can do it.
Our main observation is

\begin{proposition} \label{p5.1}
Let $M$ be an almost-Hermitian manifold. Then $dJ=0$ implies $\delta J=0$.  More precisely, for any $X\in \Gamma(TM)$ we have \\
i) $\langle JX, \delta J\rangle+\langle dJ(X,e_{i}),Je_{i}\rangle\equiv 0$,\\
ii) $\langle X, \delta J\rangle+\langle dJ(X,e_{i}),e_{i}\rangle\equiv 0$.
\end{proposition}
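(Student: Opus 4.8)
The plan is to reduce everything to two pointwise algebraic identities valid on any almost-Hermitian manifold: for every $X\in\Gamma(TM)$ the endomorphism $\nabla_{X}J$ is \emph{skew-symmetric} with respect to $g$, and it \emph{anticommutes} with $J$. The first follows by differentiating the compatibility relation $\langle JY,JZ\rangle=\langle Y,Z\rangle$ along $X$ and cancelling the terms that only involve $\nabla_{X}Y,\nabla_{X}Z$; the second follows by differentiating $J^{2}=-\mathrm{id}$, which gives $(\nabla_{X}J)J=-J(\nabla_{X}J)$. I would record these two facts at the outset, because the rest is just careful bookkeeping with inner products.

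Next I would expand $dJ(X,e_{i})=(\nabla_{X}J)e_{i}-(\nabla_{e_{i}}J)X$ and pair it against $Je_{i}$ and against $e_{i}$, summing over $i$. The two terms coming from $(\nabla_{X}J)e_{i}$ drop out: $\langle(\nabla_{X}J)e_{i},e_{i}\rangle$ sums to $\mathrm{tr}(\nabla_{X}J)=0$ by skew-symmetry, while $\langle(\nabla_{X}J)e_{i},Je_{i}\rangle$ sums (up to sign) to the trace of $(\nabla_{X}J)J$, which is itself skew-symmetric (a product of two skew operators that anticommute), hence traceless. Here it is essential that $\{Je_{i}\}$ is again a local orthonormal frame — this is exactly where the almost-Hermitian hypothesis is used — so that these trace computations are legitimate.

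Consequently only the $-(\nabla_{e_{i}}J)X$ contributions survive, and I would identify them with $\delta J=-\sum_{i}(\nabla_{e_{i}}J)e_{i}$. Using skew-symmetry of $J$ and of $\nabla_{e_{i}}J$ together with $(\nabla_{e_{i}}J)J=-J(\nabla_{e_{i}}J)$, one rewrites
$\sum_{i}\langle(\nabla_{e_{i}}J)X,Je_{i}\rangle=\sum_{i}\langle(\nabla_{e_{i}}J)(JX),e_{i}\rangle\cdot(\pm1)$
and collapses it to $\langle JX,\delta J\rangle$, giving (i); the analogous manipulation without the extra $J$ gives $\langle X,\delta J\rangle$, which is (ii). Finally, setting $dJ=0$ in (ii) yields $\langle X,\delta J\rangle=0$ for all $X$, hence $\delta J=0$, which is the first assertion of the proposition.

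The only real obstacle is sign discipline: the argument moves $J$ (skew) and $\nabla_{e_{i}}J$ (skew) through $\langle\cdot,\cdot\rangle$ several times and invokes the anticommutation relation, and a single misplaced sign collapses the identities. There is nothing conceptually deep beyond the two opening lemmas, so I would be most careful in the passage of the previous paragraph where $J$ is shuttled across the inner product and across $\nabla_{e_{i}}J$ simultaneously.
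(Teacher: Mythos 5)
Your proof is correct, and it takes a cleaner, more invariant route than the paper's. The paper chooses a normal frame at a point, re-expands $dJ(X,e_{i})$ as $\nabla_{X}Je_{i}-\nabla_{e_{i}}JX+J\nabla_{e_{i}}X$, and kills the unwanted terms by producing two copies of $\operatorname{div}X$ (respectively $\operatorname{div}JX$) that cancel; the almost-Hermitian hypothesis enters there through identities like $\langle J\nabla_{e_{i}}X,Je_{i}\rangle=\langle\nabla_{e_{i}}X,e_{i}\rangle$. You instead keep $\nabla J$ as a tensor, record once and for all that $\nabla_{X}J$ is skew-symmetric (from differentiating compatibility) and anticommutes with $J$ (from differentiating $J^{2}=-\mathrm{id}$), and then dispose of the $(\nabla_{X}J)e_{i}$ contributions by pure trace arguments: $\operatorname{tr}(\nabla_{X}J)=0$ and $\operatorname{tr}\bigl((\nabla_{X}J)J\bigr)=0$ since a product of two anticommuting skew operators is again skew. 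The surviving terms collapse to $-\langle X,\delta J\rangle$ and $-\langle JX,\delta J\rangle$ exactly as you say, and I verified your signs. What your approach buys is that no normal frame and no divergence bookkeeping are needed — everything is pointwise linear algebra once the two opening lemmas are in place — and it isolates precisely where compatibility is used (skewness of $J$ and hence of $\nabla_{X}J$); one small quibble is that the legitimacy of the trace computations rests on the orthonormality of $\{e_{i}\}$ together with skewness of $J$, rather than on $\{Je_{i}\}$ being orthonormal per se, but that is an equivalent formulation of the same hypothesis, not a gap.
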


\begin{proof}
We choose the normal frame field (i.e.
$\nabla_{e_{i}}e_{j}|_{p}=0$ for a fixed point $p$). Then $dJ(X,e_{i})=\nabla_{X}Je_{i}-\nabla_{e_{i}}JX+J\nabla_{e_{i}}X$. Hence
\begin{eqnarray*}
\langle dJ(X,e_{i}),Je_{i}\rangle &=& \langle\nabla_{X}Je_{i}, Je_{i}\rangle-\langle\nabla_{e_{i}}JX, Je_{i}\rangle+\langle J\nabla_{e_{i}}X, Je_{i}\rangle\\
&=& -\langle\nabla_{e_{i}}JX, Je_{i}\rangle+\langle \nabla_{e_{i}}X, e_{i}\rangle\\
&=& \langle JX, \nabla_{e_{i}}Je_{i}\rangle-e_{i}\langle X,e_{i} \rangle+div X\\
&=& -\langle JX,\delta J\rangle-div X+div X=-\langle JX,\delta J\rangle
\end{eqnarray*}
and
\begin{eqnarray*}
\langle dJ(X,e_{i}),e_{i}\rangle &=& \langle\nabla_{X}Je_{i}, e_{i}\rangle-\langle\nabla_{e_{i}}JX, e_{i}\rangle+\langle J\nabla_{e_{i}}X, e_{i}\rangle\\
&=& -\langle\nabla_{e_{i}}JX, e_{i}\rangle-\langle \nabla_{e_{i}}X, Je_{i}\rangle\\
&=& -div JX+\langle X,\nabla_{e_{i}}Je_{i}\rangle-e_{i}\langle X, Je_{i}\rangle\\
&=& -div JX-\langle X, \delta J\rangle+e_{i}\langle JX, e_{i}\rangle=-\langle X, \delta J\rangle.
\end{eqnarray*}

\end{proof}

Now we explain why $dJ=0$ can imply $\delta J=0$. Let $J_{t}, -\epsilon<t<\epsilon$ be a family of almost complex structures.
$J_{0}$ is compatible with the Riemannian metric. Then the energy density $e(J_{0})=\frac{1}{2}\sum_{i=1}^{2n}|J_{0}e_{i}|^{2}=n$.
And $$e(J_{t})=\frac{1}{2}\sum_{i}|J_{t}e_{i}|^{2}
=\frac{1}{2}\sum_{i,j}(J^{j}_{i})^{2}=\frac{1}{4}\sum_{i,j}(J^{j}_{i})^{2}+(J^{i}_{j})^{2}=\frac{1}{4}\sum_{i}\sum_{j}(J^{j}_{i})^{2}+(J^{i}_{j})^{2}$$
$$\geq \frac{1}{2}\sum_{i}|\sum_{j}J^{j}_{i}J^{i}_{j}|=\frac{1}{2}\sum_{i} 1=n=e(J_{0}),$$
where $J_{t}e_{i}=J_{i}^{j}e_{j}$. So the energy $E(J_{0})=\int_{M}e(J_{0})dv$ is minimal. Let us compare with harmonic maps. Let $f$ be a smooth map between two Riemannian
manifolds $M$ and $N$. If the energy $E(f)=\int_{M}e(f)dv$ is minimal, then $f$ is a harmonic map and the trace $\delta (df)=0$ \cite{[X]}. So intuitively we should has $\delta J=0$.

Combining theorem \ref{t3.3} and proposition \ref{p5.1}, we immediately have

\begin{theorem}\label{t5.2}
Let $M$ be an almost-Hermitian manifold. Then $dJ=0$ if and only if
\begin{equation} \int_{M}(|\nabla J|^{2}+S-\langle R(e_{i}, e_{j})Je_{i}, Je_{j}\rangle)dv=0, \label{f5.1} \end{equation}
where $S$ denotes the scale curvature.
\end{theorem}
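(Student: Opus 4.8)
The plan is to specialize the Bochner identity of Theorem~\ref{t3.3} to the almost-Hermitian case and then integrate, using Proposition~\ref{p5.1} to trade the hypothesis $\Delta J=0$ for the weaker $dJ=0$. First I would exploit compatibility of $J$ with $g$ in two places on the right-hand side of \ref{f3.1}. Since $J$ is a pointwise isometry, $|Je_{i}|=|e_{i}|=1$, so the energy density is the constant $e(J)\equiv n$ (here $\dim M=2n$) and hence $\Delta e(J)=0$. Using again that $J$ preserves the metric, $\langle JR(e_{i},e_{j})e_{i},Je_{j}\rangle=\langle R(e_{i},e_{j})e_{i},e_{j}\rangle$, and summing over $i,j$ this equals the scalar curvature $S$, with the sign convention for $R$ fixed in Proposition~\ref{p2.1}. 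Substituting these two facts into \ref{f3.1} yields the pointwise identity
\[
\langle\Delta J,J\rangle=|\nabla J|^{2}+S-\langle R(e_{i},e_{j})Je_{i},Je_{j}\rangle .
\]

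Next I would integrate this identity over the compact manifold $M$ and invoke the self-adjointness of $\Delta$ noted in Section~2:
\[
(dJ,dJ)+(\delta J,\delta J)=(\Delta J,J)=\int_{M}\bigl(|\nabla J|^{2}+S-\langle R(e_{i},e_{j})Je_{i},Je_{j}\rangle\bigr)\,dv\geq 0 .
\]
This reduces the theorem to showing that the vanishing of the integral is equivalent to $dJ=0$ alone, which is precisely where Proposition~\ref{p5.1} enters.

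For the forward direction, suppose $dJ=0$. By Proposition~\ref{p5.1} we then also have $\delta J=0$, hence $\Delta J=0$, so $(\Delta J,J)=0$ and the integral \ref{f5.1} vanishes. For the converse, if that integral vanishes then $(dJ,dJ)+(\delta J,\delta J)=0$; since both summands are nonnegative, each must vanish, and in particular $dJ=0$. (One even recovers $\delta J=0$ for free, consistently with Proposition~\ref{p5.1}.)

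I expect the only genuinely substantive point, as opposed to routine bookkeeping, to be the identification $\sum_{i,j}\langle JR(e_{i},e_{j})e_{i},Je_{j}\rangle=S$ in the almost-Hermitian case: it uses that $J$ lies in the orthogonal group of each tangent space together with the precise curvature normalization adopted in Proposition~\ref{p2.1}, and it is also the reason why the mixed curvature term of \ref{f3.1} collapses to the scalar curvature. Once that normalization is pinned down, the remainder is a direct assembly of Theorem~\ref{t3.3}, Proposition~\ref{p5.1}, and the inner-product positivity of $\Delta$.
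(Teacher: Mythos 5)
Your proposal is correct and follows the same route as the paper, which proves the theorem simply by combining Theorem~\ref{t3.3} (integrated, using self-adjointness of $\Delta$ as already displayed at the start of Section~4) with Proposition~\ref{p5.1}; your observation that compatibility turns $\langle JR(e_{i},e_{j})e_{i},Je_{j}\rangle$ into the scalar curvature and makes $e(J)$ constant is exactly the intended specialization. No gaps.
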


We use proposition \ref{p5.1} to give a vanishing result related to Nijenhuis tensor. This may be known elsewhere.
\begin{theorem}
Let $M$ be an almost-Hermitian manifold. Then for any $X\in \Gamma(TM)$ we have $\langle N(J)(X,e_{i}),e_{i}\rangle\equiv0.$
\end{theorem}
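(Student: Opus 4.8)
The plan is to reduce the identity to the contraction formulas of Proposition \ref{p5.1}, using the relation between the Nijenhuis tensor and $dJ$ that already appeared in the proof of Proposition \ref{p3.2}.

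First I would recall from that proof that $dJ(X,Y)-dJ(JX,JY)=-JN(J)(X,Y)$ for all vector fields $X,Y$. Applying $J$ to both sides and using $J^{2}=-\mathrm{id}$ gives the pointwise identity
\[
N(J)(X,Y)=J\,dJ(X,Y)-J\,dJ(JX,JY).
\]
Putting $Y=e_{i}$, pairing with $e_{i}$ and summing over $i$, I would then use that $J$ is skew-adjoint with respect to $g$ — which is exactly the almost-Hermitian hypothesis, $\langle JU,V\rangle=-\langle U,JV\rangle$ — to move $J$ off the first factor, obtaining
\[
\langle N(J)(X,e_{i}),e_{i}\rangle=-\langle dJ(X,e_{i}),Je_{i}\rangle+\langle dJ(JX,Je_{i}),Je_{i}\rangle.
\]

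Next I would treat the two resulting traces separately. The first is precisely the quantity in Proposition \ref{p5.1} i), so $-\langle dJ(X,e_{i}),Je_{i}\rangle=\langle JX,\delta J\rangle$. For the second, the key point is that $\{Je_{i}\}$ is again a local orthonormal frame (since $J$ is $g$-orthogonal), and a one-line computation with an orthogonal change of frame shows that the contraction $\sum_{i}\langle dJ(Z,f_{i}),f_{i}\rangle$ is independent of the chosen orthonormal frame $\{f_{i}\}$; hence $\sum_{i}\langle dJ(JX,Je_{i}),Je_{i}\rangle=\sum_{i}\langle dJ(JX,e_{i}),e_{i}\rangle$, which by Proposition \ref{p5.1} ii) applied with $X$ replaced by $JX$ equals $-\langle JX,\delta J\rangle$. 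Adding the two contributions, the $\langle JX,\delta J\rangle$ terms cancel and $\langle N(J)(X,e_{i}),e_{i}\rangle\equiv 0$ follows. Note that this argument does not use $dJ=0$: it is an unconditional identity on any almost-Hermitian manifold.

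The only mildly delicate point I expect is bookkeeping of signs — the sign in $dJ(X,Y)-dJ(JX,JY)=-JN(J)$, the skew-adjointness sign $\langle JU,V\rangle=-\langle U,JV\rangle$, and the two signs coming from Proposition \ref{p5.1} — together with making the (elementary, but worth stating) frame-independence observation that justifies replacing $\{e_{i}\}$ by $\{Je_{i}\}$ in the trace. An alternative, more computational route would be to expand $N(J)(X,e_{i})$ directly in Lie brackets, contract, and integrate by parts using a normal frame as in the proof of Proposition \ref{p5.1}; that also works but is messier, so I would prefer the reduction above.
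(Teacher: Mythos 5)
Your argument is correct and is essentially the paper's own proof: both reduce $\langle N(J)(X,e_{i}),e_{i}\rangle$ to $\langle dJ(JX,Je_{i}),Je_{i}\rangle-\langle dJ(X,e_{i}),Je_{i}\rangle$ via the identity $dJ(X,Y)-dJ(JX,JY)=-JN(J)(X,Y)$ and the compatibility of $J$ with $g$, then cancel the two terms using parts i) and ii) of Proposition \ref{p5.1}. Your explicit remark on the frame-independence of the trace (justifying the substitution of $\{Je_{i}\}$ for $\{e_{i}\}$) is a small point the paper leaves implicit, but the route is the same.
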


\begin{proof}
By proposition \ref{p5.1}, $$\langle N(J)(X,e_{i}),e_{i}\rangle=\langle JN(J)(X,e_{i}),Je_{i}\rangle=\langle dJ(JX,Je_{i})-dJ(X,e_{i}), Je_{i}\rangle$$
 $$=\langle dJ(JX,Je_{i}), Je_{i}\rangle-\langle dJ(X,e_{i}), Je_{i}\rangle=-\langle JX, \delta J\rangle+\langle JX, \delta J\rangle=0.$$
\end{proof}

\subsection{Connections with the balanced metrics}
Let $M$ be an almost-Hermitian manifold. Let $\omega$ given by $\omega(X, Y)=\langle X, JY\rangle$ be the almost-Hermitian form. When $J$ is integrable, we say that $J$ induces a K\"{a}hler structure if $d\omega=0$ and a balanced structure if $d\omega^{m-1}=0$ ($m=\frac{\dim M}{2}$) (c.f. \cite{[M]}). We will show that

\begin{theorem}
Let $M$ be a compact almost-Hermitian $n$-manifold ($n=2m$). If formula \ref{f5.1} holds, then $J$ induces a balanced structure.
\end{theorem}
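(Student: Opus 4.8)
The plan is to combine Theorem~\ref{t5.2}, Proposition~\ref{p5.1} and Proposition~\ref{p3.2} with the standard dictionary between the fundamental $2$-form $\omega$ and the tangent-bundle-valued $1$-form $J$. By Theorem~\ref{t5.2}, formula~\ref{f5.1} is equivalent to $dJ=0$, so it is enough to establish the chain
\[
dJ=0\ \Longrightarrow\ \big(\,N(J)=0\ \text{ and }\ \delta\omega=0\,\big)\ \Longrightarrow\ J\ \text{ induces a balanced structure},
\]
since ``$J$ induces a balanced structure'' means exactly that $J$ is integrable and $d\omega^{m-1}=0$.

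First I would dispose of integrability: from $dJ=0$, Proposition~\ref{p5.1}(ii) gives $\langle X,\delta J\rangle=-\langle dJ(X,e_i),e_i\rangle=0$ for every $X$, hence $\delta J=0$, so $\Delta J=0$, and Proposition~\ref{p3.2} makes $J$ a genuine complex structure. Thus only $d\omega^{m-1}=0$ remains.

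The heart of the argument is to convert the tangent-bundle-valued identity $\delta J=0$ into the vanishing of the ordinary Hodge codifferential $\delta\omega$ of the fundamental $2$-form. Working in a normal frame at a point and using $\omega(X,Y)=\langle X,JY\rangle$, one computes $(\nabla_{e_i}\omega)(e_i,X)=\langle e_i,(\nabla_{e_i}J)X\rangle$, hence $\delta\omega(X)=-\sum_i\langle e_i,(\nabla_{e_i}J)X\rangle$. Differentiating the compatibility relation $\langle JY,JZ\rangle=\langle Y,Z\rangle$ and using $J^2=-\mathrm{id}$ shows that each endomorphism $\nabla_XJ$ is $g$-skew-symmetric, i.e. $\langle(\nabla_XJ)Y,Z\rangle=-\langle Y,(\nabla_XJ)Z\rangle$. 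Taking $X=Y=e_i$ in this identity turns the previous formula into $\delta\omega(X)=\langle\sum_i(\nabla_{e_i}J)e_i,X\rangle=-\langle\delta J,X\rangle$, so $\delta J=0$ indeed forces $\delta\omega=0$.

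Finally I would invoke the classical fact that on a Hermitian $2m$-manifold $*\omega=\tfrac{1}{(m-1)!}\,\omega^{m-1}$, whence $\delta\omega=-\tfrac{1}{(m-1)!}*d\omega^{m-1}$; since $*$ is a bundle isomorphism, $\delta\omega=0$ if and only if $d\omega^{m-1}=0$. Combined with the integrability obtained in the first step, this says exactly that $J$ induces a balanced structure. The step I expect to be the main obstacle is the third paragraph: one must take care to match the codifferential $\delta$ on tangent-bundle-valued forms used throughout the paper with the ordinary codifferential on scalar forms (getting the signs right), and the comparison genuinely uses that $g$ is $J$-invariant, through the $g$-skew-symmetry of $\nabla J$. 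This is where the almost-Hermitian hypothesis enters beyond its role in Proposition~\ref{p5.1}; everything else is either already in the excerpt (Theorem~\ref{t5.2}, Propositions~\ref{p5.1} and~\ref{p3.2}) or a textbook computation with $\omega$.
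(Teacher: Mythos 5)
Your proposal is correct and follows essentially the same route as the paper: Theorem~\ref{t5.2} gives $dJ=0$, Proposition~\ref{p5.1} gives $\delta J=0$ and integrability, and the paper's accompanying lemma establishes exactly your identity $-\delta\omega(X)=\langle\delta J,X\rangle$ (by a divergence manipulation rather than via the skew-symmetry of $\nabla_XJ$, a cosmetic difference) together with the equivalence $d\omega^{m-1}=0\Leftrightarrow\delta\omega=0$. Your explicit justification of that last equivalence via $*\omega=\tfrac{1}{(m-1)!}\omega^{m-1}$ fills in a step the paper merely asserts.
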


For a special case $n=4$, formula \ref{f5.1} implies that $J$ induces a K\"{a}hler structure.

Since formula \ref{f5.1} implies that $dJ=0$ (hence $J$ is integrable) and $\delta J=0$. Then theorem 5.4 follows from below lemma.
\begin{lemma}
$d\omega^{m-1}=0$ if and only if $\delta J=0$.
\end{lemma}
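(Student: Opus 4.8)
The plan is to translate everything into the language of the Lefschetz operator and its adjoint on an almost-Hermitian manifold, and then to compare the differential form $d\omega^{m-1}$ with the tangent-bundle-valued quantity $\delta J$ under the musical isomorphism induced by $g$. Write $L$ for the operator $\alpha\mapsto\omega\wedge\alpha$ on forms, and $\Lambda$ for its pointwise adjoint. The classical identity $d\omega^{m-1}=(m-1)\,\omega^{m-2}\wedge d\omega$ reduces the condition $d\omega^{m-1}=0$ to a statement about $d\omega$ after applying $L^{m-2}$; since $L^{m-2}:\Lambda^{3}T^{*}M\to\Lambda^{2m-1}T^{*}M$ is injective on an almost-Hermitian manifold of real dimension $2m$ (this is a pointwise linear-algebra fact, the hard Lefschetz decomposition at a point), we get that $d\omega^{m-1}=0$ is equivalent to the primitive part of $d\omega$ vanishing together with a trace condition. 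Concretely, $L^{m-2}(d\omega)=0$ is equivalent to $\Lambda(d\omega)=0$ (the only part of a $3$-form that $L^{m-2}$ can kill is the non-primitive part, which is detected by $\Lambda$). So the first key step is the reduction
$$d\omega^{m-1}=0\iff \Lambda(d\omega)=0.$$

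The second step is to identify $\Lambda(d\omega)$ with $\delta J$ up to the metric isomorphism. Here I would compute in a normal frame at a point $p$, exactly as in the proof of Proposition \ref{p5.1}. One has $d\omega(X,Y,Z)=\langle(\nabla_{X}J)Y,Z\rangle+\langle(\nabla_{Y}J)Z,X\rangle+\langle(\nabla_{Z}J)X,Y\rangle$, and contracting two slots against the frame $\{e_{i}\}$ in the way that computes $\Lambda$ gives, after using the skew-symmetry $\langle(\nabla_{X}J)Y,Z\rangle=-\langle(\nabla_{X}J)Z,Y\rangle$ (valid because $J$ is $g$-compatible) and the definition $\delta J(X)=-(\nabla_{e_{i}}J)(e_{i},X)$ wait—more precisely $\delta J = -(\nabla_{e_i}J)e_i$ as a vector field—a multiple of $\langle J X,\delta J\rangle$ or $\langle X,\delta J\rangle$ as $X$ ranges over the frame. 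This is essentially the content already extracted in Proposition \ref{p5.1}(i) and (ii): those two identities say that the two natural contractions of $dJ$ (equivalently of $d\omega$, since $\omega$ and $J$ carry the same information via $g$) are pointwise proportional to $\delta J$. So $\Lambda(d\omega)=0$ at every point if and only if $\delta J=0$, and combining with the first step finishes the lemma.

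The main obstacle I anticipate is bookkeeping the combinatorial constants and sign conventions in the identification $d\omega\leftrightarrow dJ$ and in the formula for $\Lambda$ acting on a $3$-form — in particular making sure that "$\Lambda(d\omega)=0$" is genuinely equivalent to "$L^{m-2}(d\omega)=0$" and not merely implied by it; this rests on the injectivity of $L^{m-2}$ on $3$-forms, which is the pointwise hard Lefschetz statement and needs the almost-complex structure only at a point, so no integrability is used here. A secondary subtlety is that $\delta J$ is a tangent-bundle-valued $0$-form while $\Lambda(d\omega)$ is an ordinary $1$-form; one must fix once and for all the isomorphism $TM\cong T^{*}M$ and check that Proposition \ref{p5.1} is being applied with $X$ ranging over a full frame so that the vanishing of the paired scalars forces the vanishing of the vector field $\delta J$ itself. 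Once those identifications are pinned down, the proof is a two-line consequence of Proposition \ref{p5.1} and the Lefschetz decomposition.
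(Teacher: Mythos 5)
Your route is genuinely different from the paper's. The paper passes through the Hodge codifferential of the fundamental form: it invokes the standard fact that $d\omega^{m-1}=0$ is equivalent to $\delta\omega=0$ (via $*\omega=\omega^{m-1}/(m-1)!$), and then shows by a direct normal-frame computation that $-\delta\omega(X)=\langle\delta J,X\rangle$, so the two codifferentials match under the musical isomorphism. You instead go through the Lefschetz operator, reducing $d\omega^{m-1}=0$ to $\Lambda(d\omega)=0$ (vanishing of the Lee form) and then contracting; your second step is essentially Proposition \ref{p5.1} again. Both are standard characterizations of the balanced condition, but the paper's version is shorter because $\delta\omega$ is already the single contraction one needs, with no Lefschetz combinatorics to track.

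However, the justification of your first step contains a genuine error. The map $L^{m-2}:\Lambda^{3}T^{*}M\to\Lambda^{2m-1}T^{*}M$ is \emph{not} injective for $m\geq 3$: its kernel is exactly the space of primitive $3$-forms, since a primitive $k$-form $\beta$ satisfies $L^{m-k+1}\beta=0$. If $L^{m-2}$ were injective, $d\omega^{m-1}=0$ would force $d\omega=0$, i.e.\ the almost-K\"ahler condition, which is strictly stronger than balanced. Your parenthetical also has the roles reversed: writing $d\omega=\beta_{0}+L\beta_{1}$ with $\beta_{0}$ primitive, $L^{m-2}$ kills the primitive part $\beta_{0}$ and is injective on the non-primitive part $L\beta_{1}$ (because $L^{m-1}$ is an isomorphism on $1$-forms), so $L^{m-2}(d\omega)=L^{m-1}\beta_{1}$ vanishes iff $\beta_{1}=0$ iff $\Lambda(d\omega)=0$. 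The displayed equivalence $d\omega^{m-1}=0\iff\Lambda(d\omega)=0$ that you state is therefore correct, but for the opposite reason to the one you give. With that repaired, and with the contraction in your second step actually carried out (one finds $\sum_{i}d\omega(e_{i},Je_{i},X)$ is a nonzero multiple of $\langle J\delta J,X\rangle$, using that $\sum_{i}\langle(\nabla_{X}J)e_{i},Je_{i}\rangle=0$ by skewness of $\nabla_{X}J$ and its anticommutation with $J$), the argument closes.
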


\begin{proof}
We use same notation $\delta$ denote the co-differential operator in Hodge theory. Then $d\omega^{m-1}=0$ if and only if $\delta\omega=0$. For any $X \in \Gamma(TM)$,
\begin{eqnarray*}
-\delta\omega(X)&=&i(e_{j})\nabla_{e_{j}}\omega (X)=(\nabla_{e_{j}}\omega) (e_{j},X)\\
&=&\nabla_{e_{j}}\omega (e_{j},X)-\omega (\nabla_{e_{j}}e_{j},X)-\omega (e_{j},\nabla_{e_{j}}X)\\
&=&e_{j}\langle e_{j},JX\rangle-\langle\nabla_{e_{j}}e_{j},JX\rangle+\langle Je_{j},\nabla_{e_{j}}X\rangle\\
&=& div(JX)+e_{j}\langle Je_{j},X\rangle-\langle\nabla_{e_{j}}Je_{j},X\rangle\\
&=& div(JX)-e_{j}\langle e_{j},JX\rangle+\langle \delta J,X\rangle\\
&=&\langle \delta J,X\rangle.
\end{eqnarray*}
\end{proof}

For a K\"{a}hler structure we have the well-known relation between curvature tensor and complex structure: $R(X,Y)JZ=JR(X,Y)Z$. For a balanced manifold, we have that
\begin{theorem}
 Let $M$ be a balanced manifold. Then $|S-\langle R(e_{i}, e_{j})Je_{i}, Je_{j}\rangle|\leq |\nabla J|^{2}.$
\end{theorem}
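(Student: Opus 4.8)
The theorem asserts a Bochner-type bound on a balanced manifold: $|S-\langle R(e_i,e_j)Je_i,Je_j\rangle|\le |\nabla J|^2$. The natural route is to combine Theorem \ref{t3.3} with the fact that on a balanced manifold $\delta J=0$ (from the lemma above, since $d\omega^{m-1}=0$). Recall formula \ref{f3.1} applied in the Hermitian case reads $\Delta e(J)+\langle\Delta J,J\rangle=|\nabla J|^2+S-\langle R(e_i,e_j)Je_i,Je_j\rangle$. On an almost-Hermitian manifold $e(J)\equiv n/2$ is constant, so $\Delta e(J)=0$, and hence
$$\langle\Delta J,J\rangle=|\nabla J|^2+S-\langle R(e_i,e_j)Je_i,Je_j\rangle.$$
Thus the quantity $S-\langle R(e_i,e_j)Je_i,Je_j\rangle$ equals $\langle\Delta J,J\rangle-|\nabla J|^2=\langle d\delta J+\delta d J,J\rangle-|\nabla J|^2$. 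Since $\delta J=0$ on a balanced manifold, this collapses to $\langle \delta dJ,J\rangle-|\nabla J|^2$. The task is therefore to show $0\le\langle\delta dJ,J\rangle\le 2|\nabla J|^2$, which would give $-|\nabla J|^2\le S-\langle R(e_i,e_j)Je_i,Je_j\rangle\le |\nabla J|^2$.

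First I would establish the lower bound. The point is that $\langle\delta dJ,J\rangle$ should be nonnegative after integration — indeed $(\delta dJ,J)=(dJ,dJ)\ge 0$ — but here we want a pointwise inequality, so I would instead argue pointwise: since $\delta J=0$, we have $\langle\Delta J,J\rangle=\langle\delta dJ,J\rangle$, and one computes $\langle\delta dJ,J\rangle = -\langle(\nabla_{e_i}dJ)(e_i,e_j),Je_j\rangle$ which after a normal-frame computation and using $d^2$-type identities relates to $|dJ|^2$ up to curvature; alternatively, and more cleanly, use the Weitzenböck identity $\langle\nabla^2 J,J\rangle=\Delta e(J)-|\nabla J|^2=-|\nabla J|^2$ together with $\Delta J=-\nabla^2 J+S$ to get $\langle\Delta J,J\rangle=|\nabla J|^2+\langle S,J\rangle$, which is exactly formula \ref{f3.1} again. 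The real content is an algebraic/differential estimate of $\langle dJ,dJ\rangle$ against $|\nabla J|^2$: on an almost-Hermitian manifold, $dJ$ is the antisymmetrization of $\nabla J$, so pointwise $|dJ|^2\le c\,|\nabla J|^2$ for a dimensional constant, and with $\delta J=0$ one expects the constant to be exactly $2$.

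Concretely, I would expand $dJ(X,Y)=(\nabla_X J)Y-(\nabla_Y J)X$, so that $|dJ|^2=\sum_{i<j}|(\nabla_{e_i}J)e_j-(\nabla_{e_j}J)e_i|^2$. Expanding gives $|dJ|^2=|\nabla J|^2-\sum_{i,j}\langle(\nabla_{e_i}J)e_j,(\nabla_{e_j}J)e_i\rangle$, using the symmetry relation $\langle(\nabla_X J)Y,Z\rangle=-\langle(\nabla_X J)Z,Y\rangle$ and $\langle(\nabla_X J)Y,Z\rangle=-\langle(\nabla_X J)(JY),JZ\rangle$ valid in the Hermitian case. The cross term $\sum_{i,j}\langle(\nabla_{e_i}J)e_j,(\nabla_{e_j}J)e_i\rangle$ should be controlled by $|\nabla J|^2$ via Cauchy-Schwarz, yielding $|dJ|^2\le 2|\nabla J|^2$. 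Then $|\langle\Delta J,J\rangle|=|\langle\delta dJ,J\rangle|$, and I would bound this by relating $\langle\delta dJ,J\rangle$ to $|dJ|^2$ pointwise — but this last step is only an integral identity $(\delta dJ,J)=\|dJ\|^2$, not a pointwise one, which is the crux of the difficulty.

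**The main obstacle.** The genuinely delicate point is passing from the integral identity $(\delta dJ,J)=\|dJ\|^2\ge 0$ to a \emph{pointwise} bound. I expect the intended argument actually keeps everything pointwise by rewriting $S-\langle R(e_i,e_j)Je_i,Je_j\rangle$ directly: from formula \ref{f3.1} with $e(J)$ constant, $S-\langle R(e_i,e_j)Je_i,Je_j\rangle=\langle\Delta J,J\rangle-|\nabla J|^2$, and since $\Delta J=d\delta J+\delta dJ$ with $\delta J=0$ this is $\langle\delta dJ,J\rangle-|\nabla J|^2$. One then needs the pointwise estimate $|\langle\delta dJ,J\rangle|\le 2|\nabla J|^2$. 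I would derive this by writing $\langle\delta dJ,J\rangle=-\langle(\nabla_{e_i}dJ)(e_i,e_k),Je_k\rangle$ and integrating by parts locally — but since we want a pointwise statement, instead I would use the contracted second Bianchi-type identity for $d^2$ given in the text, $d^2 J(e_i,e_i,e_k)$ expressed via curvature, to trade the second derivatives of $J$ against curvature and first derivatives, and then recognize the resulting expression. Honestly, I suspect the cleanest proof simply observes that on a balanced manifold formula \ref{f5.1}'s integrand $|\nabla J|^2+S-\langle R(e_i,e_j)Je_i,Je_j\rangle$ equals $|dJ|^2\ge 0$ pointwise (this is the Hermitian Weitzenböck with $\delta J=0$, \emph{before} integration), giving the lower bound $S-\langle\cdots\rangle\ge -|\nabla J|^2$ immediately, and the upper bound $S-\langle\cdots\rangle\le|\nabla J|^2$ then follows from $|dJ|^2\le 2|\nabla J|^2$, which is the elementary antisymmetrization estimate above. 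So the real work is just that one algebraic inequality $|dJ|^2\le 2|\nabla J|^2$ for almost-Hermitian $J$, and the hard part is verifying the pointwise Weitzenböck $|\nabla J|^2+\langle S,J\rangle=|dJ|^2$ holds without the integral — which requires checking that the $\Delta e(J)$ term and the $d\delta J$ term vanish pointwise, the former because $e(J)$ is constant and the latter because $\delta J=0$ on a balanced manifold.
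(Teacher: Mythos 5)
Your plan lands on exactly the paper's route: show that with $\delta J=0$ the Weitzenb\"ock formula \ref{f3.1} becomes the \emph{pointwise} identity $|dJ|^{2}=|\nabla J|^{2}+S-\langle R(e_{i},e_{j})Je_{i},Je_{j}\rangle$, then conclude $-|\nabla J|^{2}\leq S-\langle\cdots\rangle\leq|\nabla J|^{2}$ from $0\leq|dJ|^{2}\leq 2|\nabla J|^{2}$. Your antisymmetrization estimate is fine and is the paper's: in a normal frame $|dJ|^{2}=\sum_{i<j}|\nabla_{e_{i}}Je_{j}-\nabla_{e_{j}}Je_{i}|^{2}\leq 2\bigl(\sum_{i,j}|\nabla_{e_{i}}Je_{j}|^{2}-\sum_{k}|\nabla_{e_{k}}Je_{k}|^{2}\bigr)\leq 2|\nabla J|^{2}$.

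The one step you flag as ``the hard part'' and leave open --- whether $\langle\delta dJ,J\rangle=|dJ|^{2}$ holds pointwise rather than merely after integration --- is a genuine gap in your write-up, but it closes cleanly using Proposition \ref{p5.1}(i), and you should note that this is where the balanced hypothesis enters a second time. Computing in a normal frame at $p$, $\langle\delta dJ,J\rangle=-\langle\nabla_{e_{i}}(dJ(e_{i},e_{k})),Je_{k}\rangle=-e_{i}\langle dJ(e_{i},e_{k}),Je_{k}\rangle+\langle dJ(e_{i},e_{k}),(\nabla_{e_{i}}J)e_{k}\rangle$. The second sum is exactly $|dJ|^{2}$ (expand $dJ(e_{i},e_{k})=(\nabla_{e_{i}}J)e_{k}-(\nabla_{e_{k}}J)e_{i}$ and compare with your expression $|dJ|^{2}=|\nabla J|^{2}-\sum_{i,k}\langle(\nabla_{e_{i}}J)e_{k},(\nabla_{e_{k}}J)e_{i}\rangle$). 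The first sum is the divergence of the vector field $V$ with $\langle V,X\rangle=-\langle dJ(X,e_{k}),Je_{k}\rangle$, and Proposition \ref{p5.1}(i) says precisely that $\langle V,X\rangle=\langle JX,\delta J\rangle$, i.e.\ $V=-J\delta J$. Since $\delta J$ vanishes \emph{identically} on a balanced manifold (not just at the point $p$), $V\equiv 0$ and its divergence vanishes pointwise, so $\langle\delta dJ,J\rangle=|dJ|^{2}$ everywhere. With that inserted, your argument is complete and coincides with the paper's proof; everything else you wrote (constancy of $e(J)$ killing $\Delta e(J)$, $d\delta J\equiv 0$, Cauchy--Schwarz on the cross term) is correct as stated.
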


\begin{proof}
We choose a normal frame $\{e_{i}\}$. Then $|\nabla J|^{2}=\sum\limits_{i,j} |\nabla_{e_{i}}Je_{j}|^{2}$, $|dJ|^{2}=\sum\limits_{i<j}|\nabla_{e_{i}}Je_{j}-\nabla_{e_{j}}Je_{i}|^{2}$.
Then
\begin{eqnarray*}
|dJ|^{2}&=& \sum\limits_{i<j}(|\nabla_{e_{i}}Je_{j}|^{2}+|\nabla_{e_{j}}Je_{i}|^{2}-2\langle\nabla_{e_{i}}Je_{j},\nabla_{e_{j}}Je_{i}\rangle)\\
&\leq& 2\sum\limits_{i<j}(|\nabla_{e_{i}}Je_{j}|^{2}+|\nabla_{e_{j}}Je_{i}|^{2})\\
&=& 2(\sum\limits_{i,j}|\nabla_{e_{i}}Je_{j}|^{2}-\sum\limits_{k}|\nabla_{e_{k}}Je_{k}|^{2})\\
&\leq& 2(|\nabla J|^{2}-\frac{|\delta J|^{2}}{n}).
\end{eqnarray*}

When $\delta J=0$, by the formula \ref{f3.1} we can calculate directly that $ |dJ|^{2}=|\nabla J|^{2}+S-\langle R(e_{i}, e_{j})Je_{i}, Je_{j}\rangle$.
So one has $-|\nabla J|^{2}\leq S-\langle R(e_{i}, e_{j})Je_{i}, Je_{j}\rangle \leq|\nabla J|^{2}$.
\end{proof}

\begin{remark}
 Let $M$ be an almost complex $n$-manifold. The $J$ needs not be compatible with the Riemannian metric.  Then $$-\int_{M}|\nabla J|^{2}dv \leq \int_{M}(\langle JR(e_{i}, e_{j})e_{i},Je_{j}\rangle-\langle R(e_{i}, e_{j})Je_{i}, Je_{j}\rangle)dv\leq (n-1)\int_{M}|\nabla J|^{2}dv.$$
\end{remark}

We leave the proof of remark to the readers.

\section{Some remarks}
Our main purpose is to find some geometric obstructions or sufficient conditions to existence of complex structures. The obstructive problem is try to find a
necessary and sufficient condition for Nijenhuis tensor vanishing. Namely $$N(J)=0 \Longleftrightarrow C(J)=0.$$ Here $C(J)$ is a global curvature
expression related to $J$, which is similar to formula \ref{f4.1} or \ref{f5.1}. Once we find such a relation, if a suitable geometric condition
can leads to $C(J)>0(<0)$ for any $J$, we can claim that the manifold does not admit a complex structure. Theorem \ref{t4.1} or \ref{t5.2} only gives a sufficient
 condition for Nijenhuis tensor vanishing.

To find an effective sufficient condition, professor Kefeng Liu suggests that we should use \ref{f3.1} to construct a suitable flow of almost
 complex structures. Under some suitable geometric condition the flow converges to an integrable one. In this case we must deal with the problem
  of how the evolution equation keeps the almost complex structures.

\bibliographystyle{amsplain}

\end{document}